\documentclass[10pt,reqno]{amsart}
\usepackage{amsmath}
\usepackage{amssymb}
\usepackage{amsthm}


\textheight 20 true cm \textwidth 13 true cm \voffset 1.2 true cm
\hoffset -0.5 true cm \marginparwidth 2 true cm
\parindent 0.5 true cm


\newlength{\defbaselineskip}
\setlength{\defbaselineskip}{\baselineskip}
\newcommand{\setlinespacing}[1]%
           {\setlength{\baselineskip}{#1 \defbaselineskip}}

\numberwithin{equation}{section}


\newtheorem{thm}{Theorem}[section]

\newtheorem{lem}[thm]{Lemma}

\theoremstyle{definition}

\theoremstyle{remark}

\numberwithin{equation}{section}


\begin{document}

\title[Unique continuation for the Cauchy-Riemann operator]
{A remark on unique continuation for the Cauchy-Riemann operator}

\author{Ihyeok Seo}

\subjclass[2010]{Primary 35B60, 35F05}
\keywords{Unique continuation, Cauchy-Riemann operator}

\address{Department of Mathematics, Sungkyunkwan University, Suwon 440-746, Republic of Korea}
\email{ihseo@skku.edu}


\begin{abstract}
In this note we obtain a unique continuation result for the differential inequality
$|\overline{\partial}u|\leq|Vu|$, where $\overline{\partial}=(i\partial_y+\partial_x)/2$
denotes the Cauchy-Riemann operator and $V(x,y)$ is a function in $L^2(\mathbb{R}^2)$.
\end{abstract}

\maketitle


\section{Introduction}

The unique continuation property is one of the most interesting properties of holomorphic functions $f\in H(\mathbb{C})$.
This property says that if $f$ vanishes in a non-empty open subset
of $\mathbb{C}$ then it must be identically zero.
Note that $u\in C^1(\mathbb{R}^2)$ satisfies the Cauchy-Riemann equation $(i\partial_y+\partial_x)u=0$
if and only if it defines a holomorphic function $f(x+iy)\equiv u(x,y)$ on $\mathbb{C}$.
From this point of view, one can see that a $C^1$ function satisfying the equation has the unique continuation property.

In this note we consider a class of non-holomorphic functions $u$ which satisfy
the differential inequality
\begin{equation}\label{inequality}
|\overline{\partial}u|\leq|Vu|,
\end{equation}
where $\overline{\partial}=(i\partial_y+\partial_x)/2$ denotes the Cauchy-Riemann operator
and $V(x,y)$ is a function on $\mathbb{R}^2$.

The best positive result for \eqref{inequality} is due to Wolff \cite{W}
(see Theorem 4 there) who proved the property for $V\in L^p$ with $p>2$.
On the other hand, there is a counterexample \cite{M} to unique continuation for \eqref{inequality}
with $V\in L^p$ for $p<2$.
The remaining case $p=2$ seems to be unknown for the differential inequality \eqref{inequality},
and note that $L^2$ is a scale-invariant space of $V$ for the equation $\overline{\partial}u=Vu$.
Here we shall handle this problem.
Our unique continuation result is the following theorem
which is based on bounds for a Fourier multiplier from $L^p$ to $L^q$.

\begin{thm}\label{thm}
Let $1<p<2<q<\infty$ and $1/p-1/q=1/2$.
Assume that $u\in L^p\cap L^q$ satisfies the inequality \eqref{inequality} with $V\in L^2$
and vanishes in a non-empty open subset of $\mathbb{R}^2$.
Then it must be identically zero.
\end{thm}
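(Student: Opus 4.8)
My plan is to reduce the differential inequality to an equation and then combine the stated Fourier multiplier bound with a connectedness argument. First I would invoke the standard device of writing $\overline{\partial}u=au$, where $a=\overline{\partial}u/u$ on $\{u\neq0\}$ and $a=0$ otherwise; then $a\in L^2$ with $|a|\leq|V|$ a.e. The operator $\overline{\partial}^{-1}$ is the Fourier multiplier with symbol $2/\bigl(i(\xi_1+i\xi_2)\bigr)$, of modulus $2|\xi|^{-1}$, i.e.\ convolution with $1/(\pi z)$; since $|z|^{-1}$ is the Riesz kernel of order $1$ in the plane, the Hardy--Littlewood--Sobolev inequality yields $\|\overline{\partial}^{-1}g\|_q\leq C\|g\|_p$ precisely under the hypothesis $1/p-1/q=1/2$, $1<p<2<q<\infty$. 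Note also that $1/p=1/2+1/q$, so H\"older gives $\|au\|_p\leq\|V\|_2\|u\|_q<\infty$, whence $au\in L^p$.

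Next I would establish the global representation $u=\overline{\partial}^{-1}(au)$. On the Fourier side $\tfrac{i}{2}(\xi_1+i\xi_2)\widehat{u}=\widehat{au}$ off the origin, so $u-\overline{\partial}^{-1}(au)$ has Fourier transform supported at the origin and is therefore a polynomial; being in $L^q$ with $q<\infty$ it must vanish. To exploit this locally I would use the absolute continuity of the integral of $|V|^2\in L^1$: there is a radius $\rho>0$ such that $C\|V\|_{L^2(B)}\leq1/2$ for every ball $B$ of radius $\rho$. This is the step where the criticality of $L^2$ is felt, since scaling gives no gain and the required smallness must come from absolute continuity rather than from shrinking.

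Let $Z$ be the union of all open sets on which $u$ vanishes a.e.; it is open and, by hypothesis, non-empty, and I want to show it is closed, so that $Z=\mathbb{R}^2$ by connectedness. Fix $x_0\in\partial Z$ and a good ball $B=B(x_0,\rho)$; since $x_0\in\partial Z$, the open set $Z\cap B$ is non-empty. Choosing a cutoff $\chi$ with $\chi=1$ on $\tfrac12 B$ and $\operatorname{supp}\chi\subset B$, and setting $v=\chi u$, I would compute $\overline{\partial}v=\chi au+(\overline{\partial}\chi)u$; applying the multiplier bound, H\"older, and the smallness of $\|V\|_{L^2(B)}$ to absorb the first term gives $\|v\|_q\leq2C\|(\overline{\partial}\chi)u\|_p$.

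The hard part is to upgrade this quantitative estimate to \emph{exact} vanishing across the possibly irregular boundary $\partial Z$: for a lens-shaped piece $Z\cap B$ the cutoff-gradient term need not vanish. I would overcome this by bringing in holomorphic rigidity. Solving $\overline{\partial}s=a$ on $B$ via the Cauchy transform of $\chi a$, the borderline embedding $\dot W^{1,2}(\mathbb{R}^2)\hookrightarrow\mathrm{BMO}$ gives $\|s\|_{\mathrm{BMO}}\lesssim\|a\|_{L^2(B)}$, which is small, so John--Nirenberg makes $e^{\pm s}$ locally integrable to every finite power. Then $f=e^{-s}u$ satisfies $\overline{\partial}f=e^{-s}(\overline{\partial}u-au)=0$ on $B$, so $f$ is holomorphic there and lies in $L^1_{\mathrm{loc}}$ by H\"older; since $f$ vanishes on the non-empty open set $Z\cap B$, analyticity forces $f\equiv0$, hence $u=0$ on $B$ and $x_0\in Z$, a contradiction. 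Thus $Z$ is closed and $u\equiv0$. The delicate point throughout is the endpoint $p=2$: it is exactly here that the Cauchy transform of $a$ fails to be bounded, and only the $\mathrm{BMO}$/John--Nirenberg control, together with the hypothesis $u\in L^p\cap L^q$ that keeps all products integrable, saves the argument.
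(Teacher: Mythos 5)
Your proposal takes a genuinely different route from the paper's, and the core of it is sound. The paper proves a Carleman inequality $\big\||z|^{-t}f\big\|_{L^q}\leq C\big\||z|^{-t}\overline{\partial}f\big\|_{L^p}$ with $C$ independent of $t$ (via the identity $\overline{\partial}(z^{-t}f)=z^{-t}\overline{\partial}f$, a Littlewood--Paley decomposition, and Young's inequality for each dyadic piece), then absorbs the potential term using smallness of $\|V\|_{L^2(B(0,r))}$ and lets $t\to\infty$ to force $u=0$ on $B(0,r)$. You instead run a Bers--Vekua similarity-principle argument: write $\overline{\partial}u=au$ with $|a|\leq|V|$, solve $\overline{\partial}s=a$ locally by the Cauchy transform, factor $u=e^{s}f$ with $f$ holomorphic, and let analyticity propagate the vanishing. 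The classical version of this argument needs $a\in L^p$ with $p>2$, so that $s$ is H\"older continuous and $e^{\pm s}$ bounded; your key observation---that $s\in \dot W^{1,2}\subset\mathrm{BMO}$ with small norm (by absolute continuity of $\int|V|^2$), so John--Nirenberg gives $e^{\pm s}\in L^r_{\mathrm{loc}}$ for $r$ up to $c/\|s\|_{\mathrm{BMO}}$, while the exact identity $1/p'+1/2+1/q=1$ keeps every product integrable---is precisely what rescues the factorization at the critical exponent. Note what each approach buys: yours, once completed, is \emph{local} (it uses only $u\in L^q_{\mathrm{loc}}$ and $V\in L^2_{\mathrm{loc}}$, so it proves more than the stated theorem, which assumes global $u\in L^p\cap L^q$), whereas the paper's Carleman argument is inherently global but stays within elementary harmonic analysis and avoids multiplying rough functions altogether. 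Your first two paragraphs (the global Liouville representation $u=\overline{\partial}^{-1}(au)$ and the cutoff estimate $\|v\|_q\leq 2C\|(\overline{\partial}\chi)u\|_p$) are never used in the final argument and can be deleted.

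One step is asserted where it most needs proof, and it is the crux: the distributional Leibniz rule $\overline{\partial}(e^{-s}u)=e^{-s}\overline{\partial}u-e^{-s}(\overline{\partial}s)u$ when $e^{-s}$ is unbounded (only exponentially integrable) and $u$, $\overline{\partial}u$ are merely in Lebesgue classes. This can be filled, but it deserves the detail: mollify $s_n=s\ast\rho_n$, apply the smooth-times-distribution product rule, and pass to the limit using Jensen's inequality $|e^{-s_n}|^{m}=e^{-m(\operatorname{Re}s)\ast\rho_n}\leq\big(e^{-m\operatorname{Re}s}\big)\ast\rho_n$ together with John--Nirenberg to get uniform integrability, hence $e^{-s_n}\to e^{-s}$ in $L^{p'}_{\mathrm{loc}}$ and $L^{q'}_{\mathrm{loc}}$ by Vitali, while $\overline{\partial}s_n\to\chi a$ in $L^2$; the pairings close exactly because $1/p'+1/2+1/q=1$. (In fact, since $s\in W^{1,2}_{\mathrm{loc}}$ and not merely $\mathrm{BMO}$, the Moser--Trudinger inequality gives $e^{\pm s}\in L^r_{\mathrm{loc}}$ for \emph{every} $r<\infty$, so no smallness is needed for this step at all.) Two small corrections: $\overline{\partial}s=a$ only where $\chi=1$, so $f$ is holomorphic on $\frac{1}{2}B$ rather than on $B$---which suffices, since $Z\cap\frac{1}{2}B$ is a non-empty open set when $x_0\in\partial Z$; and John--Nirenberg with a fixed small norm gives integrability up to a large finite power, not ``every finite power'' as you wrote (though only the power $p'$ is ever used).
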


The unique continuation property also holds for harmonic functions,
which satisfy the Laplace equation $\Delta u=0$,
since they are real parts of holomorphic functions.
This was first extended by Carleman ~\cite{C} to a class of non-harmonic functions
satisfying the inequality $|\Delta u|\leq|Vu|$ with $V\in L^\infty(\mathbb{R}^2)$.
There is an extensive literature on later developments in this subject.
In particular, the problem of finding all the possible $L^p$ functions $V$,
for which $|\Delta u|\leq|Vu|$ has the unique continuation, is completely solved (see \cite{JK,KN,KT}).
See also the survey papers of Kenig \cite{K} and Wolff \cite{W2} for more details,
and the recent paper of Kenig and Wang \cite{KW} for a stronger result
which gives a quantitative form of the unique continuation.

\medskip

Throughout the paper, the letter $C$ stands for positive constants possibly different
at each occurrence.
Also, the notations $\widehat{f}$ and $\mathcal{F}^{-1}(f)$ denote
the Fourier and the inverse Fourier transforms of $f$, respectively.


\section{A preliminary lemma}

The standard method to study the unique continuation property
is to obtain a suitable Carleman inequality for relevant differential operator.
This method originated from Carleman's classical work \cite{C} for elliptic operators.
In our case we need to obtain the following inequality for the Cauchy-Riemann operator $\overline{\partial}=(i\partial_y+\partial_x)/2$,
which will be used in the next section for the proof of Theorem \ref{thm}:

\begin{lem}\label{lem}
Let $f\in C_0^\infty(\mathbb{R}^2\setminus\{0\})$.
For all $t>0$, we have
\begin{equation}\label{Sobo}
\big\||z|^{-t}f\big\|_{L^q}
\leq C\big\||z|^{-t}\overline{\partial}f\big\|_{L^p}
\end{equation}
if $1<p<2<q<\infty$ and $1/p-1/q=1/2$.
Here, $z=x+iy\in\mathbb{C}$ and $C$ is a constant independent of $t$.
\end{lem}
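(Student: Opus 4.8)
The plan is to prove the weighted Carleman inequality by exploiting the fact that $\overline{\partial}$ factors cleanly in a rotated/complex variable and that the weight $|z|^{-t}$ can be absorbed by conjugation. Writing $L = e^{t\log|z|}\,\overline{\partial}\,e^{-t\log|z|}$, the desired bound \eqref{Sobo} is equivalent to the uniform estimate $\|g\|_{L^q}\leq C\|Lg\|_{L^p}$ for $g = |z|^{-t}f$, so the problem reduces to establishing an $L^p\to L^q$ bound for the inverse of the conjugated operator $L$ that is \emph{independent of $t$}. Since $\overline{\partial}(|z|^{-t}f) = |z|^{-t}\big(\overline{\partial}f - \tfrac{t}{2}\,\overline{z}^{-1}f\big)$ (using $\overline{\partial}|z|^{-t} = -\tfrac{t}{2}\overline{z}^{-1}|z|^{-t}$), the conjugated operator is $Lg = \overline{\partial}g + \tfrac{t}{2}\overline{z}^{-1}g$, a first-order perturbation of $\overline{\partial}$ by a singular potential scaling exactly like the weight.

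The key step is to diagonalize $L$ on the Fourier side after separating variables in polar coordinates. First I would pass to polar coordinates $z = re^{i\theta}$, where $\overline{\partial} = \tfrac{1}{2}e^{i\theta}\big(\partial_r + \tfrac{i}{r}\partial_\theta\big)$, and expand $g$ in a Fourier series in $\theta$; on each angular mode $e^{ik\theta}$ the operator $L$ becomes a one-dimensional ordinary differential operator in $r$ whose inverse is an explicit integral kernel. The scaling invariance of the setup — the condition $1/p - 1/q = 1/2$ matches the homogeneity degree $-1$ of the kernel of $\overline{\partial}^{-1}$ in $\mathbb{R}^2$ — is what forces the correct exponents and, crucially, makes the resulting bound independent of $t$. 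The uniformity in $t$ is the heart of the matter: the substitution $r \mapsto e^{s}$ together with rescaling will convert the $t$-dependence into a harmless translation, so that the multiplier norm does not degrade as $t\to\infty$.

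The main obstacle, as I see it, is to control the one-dimensional convolution operators arising on each angular mode uniformly in both the mode index $k$ and the parameter $t$, and then to reassemble them into an $L^p\to L^q$ bound on $\mathbb{R}^2$. After the logarithmic change of variables $r = e^s$ the radial operators become convolutions on the line against kernels that decay exponentially away from the origin with rates depending on $k$ and $t$; the difficulty is that summing the mode-by-mode bounds naively loses the uniformity, so I would instead recognize the full operator as a Fourier multiplier on $\mathbb{R}^2$ and estimate its $L^p\to L^q$ norm directly. The natural tool is a Stein–Weiss or Hardy–Littlewood–Sobolev type inequality for the homogeneous kernel $1/\overline{z}$, combined with the observation that conjugation by $|z|^{-t}$ does not change the principal symbol, only a lower-order term that the $L^q$-gain on the left absorbs. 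I would therefore aim to show that the relevant multiplier $m_t(\xi)$ (the symbol of $L^{-1}$) satisfies a bound of Hörmander–Mikhlin type uniformly in $t$, reducing \eqref{Sobo} to a single scale-invariant endpoint estimate whose constant is manifestly $t$-independent.
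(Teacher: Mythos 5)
Your opening reduction is fine: setting $g=|z|^{-t}f$, the claim \eqref{Sobo} is indeed equivalent to a uniform $L^p\to L^q$ bound for the conjugated operator, which (correcting a sign slip in your definition of $L$) is $Lg=\overline{\partial}g+\tfrac{t}{2}\overline{z}^{-1}g$. The fatal problem is the mechanism you propose for the uniformity in $t$, which is exactly the heart of the lemma. You plan to treat $\tfrac{t}{2}\overline{z}^{-1}g$ as ``a lower-order term that the $L^q$-gain on the left absorbs.'' This is quantitatively impossible: the term carries the factor $t$, so any perturbative/absorption argument produces a constant that grows with $t$; worse, $\|\,\overline{z}^{-1}g\|_{L^p}$ is not even controlled by $\|g\|_{L^q}$ with a uniform constant, since the H\"older step would require $|z|^{-1}\in L^2$ near the support of $g$, and $|z|^{-2}$ is not locally integrable in $\mathbb{R}^2$. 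Two adjacent claims are also incorrect: $L$ is not translation-invariant on $\mathbb{R}^2$ (its potential depends on $z$), so it is not a Fourier multiplier there and ``the symbol of $L^{-1}$'' has no meaning in the multiplier sense; and a H\"ormander--Mikhlin-type theorem can only give $L^p\to L^p$ bounds, never the gain $1/p-1/q=1/2$ you need (that gain comes from Young/Hardy--Littlewood--Sobolev-type estimates, as in the paper).

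The idea your proposal is missing — and which makes the $t$-dependence vanish identically rather than need to be ``absorbed'' — is to conjugate by the \emph{holomorphic} weight $z^{-t}$ instead of the real weight $|z|^{-t}$: since $\overline{\partial}(z^{-t})=0$ away from the origin and $|z^{-t}|=|z|^{-t}$, one has $\overline{\partial}(z^{-t}f)=z^{-t}\overline{\partial}f$, so with $g=z^{-t}f$ the lemma collapses to the single $t$-free inequality $\|g\|_{L^q}\leq C\|\overline{\partial}g\|_{L^p}$, which the paper proves by Littlewood--Paley decomposition plus Young's inequality (morally, HLS for the Cauchy kernel $1/(\pi z)$ — your homogeneity observation about the exponent gap is the one correct ingredient here). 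Your polar-coordinates route can in principle be completed, but not the way you describe: after $r=e^s$ the conjugated operator becomes a genuine Fourier multiplier on the cylinder $\mathbb{R}\times\mathbb{T}$, namely $i\sigma-k-\beta$ with $\beta=2/q-t$, and its inverse is bounded $L^p\to L^q$ only when $\beta$ stays a fixed distance from the integers; at resonant values $t\in 2/q+\mathbb{Z}$ the per-mode estimate actually fails by a one-dimensional scaling argument. So the $t$-dependence is not a ``harmless translation'' — a non-resonance restriction on $t$ (harmless for the unique continuation application, since one only needs $t\to\infty$ along a sequence) is an unavoidable feature of the real-weight approach. As written, your proposal contains no valid argument for a $t$-uniform constant, so the gap is essential.
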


\begin{proof}
First we note that
$$\overline{\partial}(z^{-t}f)=z^{-t}\overline{\partial}f+f\overline{\partial}(z^{-t})
=z^{-t}\overline{\partial}f$$
for $z\in\mathbb{C}\setminus\{0\}$.
Then the inequality \eqref{Sobo} is equivalent to
$$\big\|z^{-t}f\big\|_{L^q}
\leq C\big\|\overline{\partial}(z^{-t}f)\big\|_{L^p}.$$
By setting $g=z^{-t}f$, we are reduced to showing that
\begin{equation*}
\|g\|_{L^q}
\leq C\|(i\partial_y+\partial_x)g\|_{L^p}
\end{equation*}
for $g\in C_0^\infty(\mathbb{R}^2\setminus\{0\})$.
To show this, let us first set
\begin{equation}\label{0}
(i\partial_y+\partial_x)g=h,
\end{equation}
and let $\psi_\delta:\mathbb{R}^2\rightarrow[0,1]$ be a smooth function
such that $\psi_\delta=0$ in the ball $B(0,\delta)$
and $\psi_\delta=1$ in $\mathbb{R}^2\setminus B(0,2\delta)$.
Then, using the Fourier transform in~\eqref{0}, we see that
\begin{equation*}
(-\eta+i\xi)\widehat{g}(\xi,\eta)=\widehat{h}(\xi,\eta).
\end{equation*}
Thus, by Fatou's lemma we are finally reduced to showing the following uniform boundedness
for a multiplier operator having the multiplier
$m(\xi,\eta)=\psi_\delta(\xi,\eta)/(-\eta+i\xi)$:
\begin{equation}\label{multi}
\bigg\|\mathcal{F}^{-1}\bigg(\frac{\psi_\delta(\xi,\eta)
\widehat{h}(\xi,\eta)}{-\eta+i\xi}\bigg)\bigg\|_{L^q}\leq C\|h\|_{L^p}
\end{equation}
uniformly in $\delta>0$.

From now on, we will show \eqref{multi} using Young's inequality for convolutions and Littlewood-Paley theorem (\cite{G}).
Let us first set for $k\in\mathbb{Z}$
$$\widehat{Th}(\xi,\eta)=m(\xi,\eta)\widehat{h}(\xi,\eta)\quad\text{and}\quad
\widehat{T_kh}(\xi,\eta)=m(\xi,\eta)\chi_k(\xi,\eta)\widehat{h}(\xi,\eta),$$
where $\chi_k(\cdot)=\chi(2^k\cdot)$ for $\chi\in C_0^\infty(\mathbb{R}^2)$ which is such that
$\chi(\xi,\eta)=1$ if $|(\xi,\eta)|\sim1$, and zero otherwise.
Also, $\sum_k\chi_k=1$.
Now we claim that
\begin{equation}\label{multi2}
\|T_kh\|_{L^q}\leq C\|h\|_{L^p}
\end{equation}
uniformly in $k\in\mathbb{Z}$.
Then, since $1<p<2<q<\infty$, by the Littlewood-Paley theorem together with Minkowski's inequality,
we get the desired inequality \eqref{multi} as follows:
\begin{align*}
\big\|\sum_kT_kh\big\|_{L^q}&\leq C\big\|\big(\sum_k|T_kh|^2\big)^{1/2}\big\|_{L^q}\\
&\leq C\big(\sum_k\|T_kh\|_{L^q}^2\big)^{1/2}\\
&\leq C\big(\sum_k\|h_k\|_{L^p}^2\big)^{1/2}\\
&\leq C\big\|\big(\sum_k|h_k|^2\big)^{1/2}\big\|_{L^p}\\
&\leq C\big\|\sum_kh_k\big\|_{L^p},
\end{align*}
where $h_k$ is given by $\widehat{h_k}(\xi,\eta)=\chi_k(\xi,\eta)\widehat{h}(\xi,\eta)$.
Now it remains to show the claim \eqref{multi2}.
But, this follows easily from Young's inequality.
Indeed, note that
$$T_kh=\mathcal{F}^{-1}\bigg(\frac{\psi_\delta(\xi,\eta)\chi_k(\xi,\eta)}{-\eta+i\xi}\bigg)
\ast h$$
and by Plancherel's theorem
\begin{align*}
\bigg\|\mathcal{F}^{-1}\bigg(\frac{\psi_\delta(\xi,\eta)\chi_k(\xi,\eta)}{-\eta+i\xi}\bigg)\bigg\|_{L^2}
&=\bigg\|\frac{\psi_\delta(\xi,\eta)\chi_k(\xi,\eta)}{-\eta+i\xi}\bigg\|_{L^2}\\
&\leq C\bigg(\int_{|(\xi,\eta)|\sim2^{-k}}\frac1{\eta^2+\xi^2}d\xi d\eta\bigg)^{1/2}\\
&\leq C.
\end{align*}
Since we are assuming the gap condition $1/p-1/q=1/2$,
by Young's inequality for convolutions,
this readily implies that
$$\|T_kh\|_{L^q}\leq \bigg\|\mathcal{F}^{-1}\bigg(\frac{\psi_\delta(\xi,\eta)\chi_k(\xi,\eta)}{-\eta+i\xi}\bigg)\bigg\|_{L^2}\|h\|_{L^p}
\leq C\|h\|_{L^p}$$
as desired.
\end{proof}

\section{Proof of Theorem \ref{thm}}
The proof is standard once one has the Carleman inequality \eqref{Sobo} in Lemma \ref{lem}.

Without loss of generality, we may show that $u$ must vanish identically
if it vanishes in a sufficiently small neighborhood of zero.
Then, since we are assuming that $u\in L^p\cap L^q$ vanishes near zero,
from \eqref{Sobo} with a standard limiting argument involving a $C_0^\infty$ approximate identity,
it follows that
\begin{equation*}
\big\||z|^{-t}u\big\|_{L^q}
\leq C\big\||z|^{-t}\overline{\partial}u\big\|_{L^p}.
\end{equation*}
Thus by \eqref{inequality} we see that
\begin{align*}
\big\||z|^{-t}u\big\|_{L^q(B(0,r))}&\leq C\big\||z|^{-t}Vu\big\|_{L^p(B(0,r))}\\
&+C\big\||z|^{-t}\overline{\partial}u\big\|_{L^p(\mathbb{R}^2\setminus B(0,r))},
\end{align*}
where $B(0,r)$ is the ball of radius $r>0$ centered at $0$. 
Then, using H\"older's inequality with $1/p-1/q=1/2$,
the first term on the right-hand side in the above can be absorbed into the left-hand side as follows:
\begin{align*}
C\big\||z|^{-t}Vu\big\|_{L^p(B(0,r))}&\leq
C\|V\|_{L^2(B(0,r))}\big\||z|^{-t}u\big\|_{L^q(B(0,r))}\\
&\leq \frac12\big\||z|^{-t}u\big\|_{L^q(B(0,r))}
\end{align*}
if we choose $r$ small enough.
Here, $\||z|^{-t}u\|_{L^q(B(0,r))}$ is finite since $u\in L^q$ vanishes near zero.
Hence we get
\begin{align*}
\|(r/|z|)^{t}u\|_{L^q(B(0,r))}
&\leq2C\|\overline{\partial}u\|_{L^p(\mathbb{R}^2\setminus B(0,r))}\\
&\leq2C\|V\|_{L^2}\|u\|_{L^q}\\
&<\infty.
\end{align*}
By letting $t\rightarrow\infty$, we now conclude that $u=0$ on $B(0,r)$.
This implies $u\equiv0$ by a standard connectedness argument.

\section*{Acknowledgment}
I would like to thank Jenn-Nan Wang for pointing out a preprint (\cite{KW}) and for some comments.

\bibliographystyle{plain}


\end{document}